\newtheorem{remark}{Remark}
\newtheorem{proposition}{Proposition}
\def\F{\mathbb{F}}
\title{A Note on a Standard Model for Galois Rings}
 \author[1]{E. Mart\'{\i}nez-Moro\thanks{Partially supported by the Spanish AEI under grant PGC2018-096446-B-C21.}}
 \author[2]{A. Pi\~nera-Nicol\'as\thanks{Partially supported by MINECO-13-MTM2013-45588-C3-1-P.}}
\author[2]{I.F. R\'ua\thanks{
Partially supported by MINECO-13-MTM2013-45588-C3-1-P and Principado de Asturias Grant GRUPIN14-142.}}
\affil[1]{Mathematics Research Institute (IMUVa), Universidad de Valladolid \\
\tt{Edgar.Martinez@uva.es} }
\affil[2]{Departamento de Matem\'aticas, Universidad de Oviedo\\
\tt{apnicolas@uniovi.es, rua@uniovi.es}}
\begin{document}

\maketitle

\begin{abstract} 

In this work we present a standard model for Galois rings based on the standard model of their residual fields, that is, a  a sequence of Galois rings starting with  ${\mathbb Z}_{p^r}$ that coves all the Galois rings with that characteristic ring and such that there is an algorithm producing each member of the sequence whose input is the size of the required ring.\\[2em]

\noindent\textbf{Keywords:} {Galois ring , Standard model , Residual field} 

\noindent \textbf{AMS classification:} {11T30, 13M05}

\end{abstract}

\section{Introduction}

During the past decades a great avenue of applications of finite fields have flourish especially on coding theory topics, see for example \cite[Chapters 6,17,18]{Sole} and the references therein. Many of those works require a huge amount of computational work on searching for codes over finite rings and specifically over Galois rings and polynomial rings over Galois rings that are the building blocks of commutative rings \cite{McD}. Nevertheless, up to the knowledge of the authors of this note, there has not been an attempt of building a standard model for dealing with Galois rings computationally in the following sense: a sequence of Galois rings with the same base ring ${\mathbb Z}_{p^r}$ covering all the Galois rings with that characteristic ring and an algorithm that has as input the size of one member of the family and as output a model for that ring  (see Section 2 for a more precise definition). Note for example, that Conway polynomials provide such an algorithmic model for finite fields but the construction of those polynomials is somehow difficult \cite{Conway}. Thus we will follow mainly the standard model proposed by Smit and Lenstra for finite fields \cite{Handbook} to devise a standard model for Galois rings based on the standard model of their residue field. 

The outline of the note is quite simple: Section 1 revises the main points in Smit and Lenstra's  standard model. In Section the standard model for Galois rings is proposed and Section 3 provides some worked examples using  Magma \cite{Magma}.

\section{Standard Model for Finite Fields}

We start with a brief description of the standard model for finite fields. Following \cite[Section 11.7]{Handbook}, given a prime $p$ and a positive integer $n$, an explicit model for a finite field of size $p^n$ is a field whose underlying additive group is $\F_p^n$, so that, if $\{e_0,\ldots,e_{n-1}\}$ is the standard $\F_p$-basis of $\F_p^n$, the field structure on $\F_p^n$ is uniquely determined by the $n^3$ elements $a_{ijk}\in\F_p$ such that $e_i\cdot e_j=\sum_{k=0}^{n-1}a_{ijk}e_k$. We assume that an explicit model is given when we both know the prime $p$ and the $n^3$ structure constants $a_{ijk}$.

Alternatively, we can construct an explicit model by giving an irreducible polynomial $f\in\F_p[x]$ of degree $n$ over $\F_p$. In such case, the use of the basis $\{1,x\ldots,x^{n-1}\}$ of the field $\F_p[x]/(f(x))$ leads us to the explicit model of the field of $p^n$ elements. As can be seen in \cite{Lenstra}, this representation can be converted into the other one by deterministic polynomial time algorithms.

According to \cite[Definition 11.7.7]{Handbook}, an algorithmic model for finite fields is a sequence $(A_q)_q$, with $q$ running over all prime powers, such that $A_q$ is an explicit model for a finite field of size $q$ obtained from an algorithm whose input is $q$. As  can be seen in \cite{HB789}, there is a particular algorithmic model $(S_q)$ such that, for all $q$, it can be computed from an explicit model $A_q$ of a field of size $q$ by a deterministic polynomial time algorithm. This model is known as \textit{a standard model for finite fields}. Its construction as follows.

A finite field $\F_{p^n}$ can be written as the tensor product $\F_{p^n}=\bigotimes_{r|n}\F_{p^{r^k}}$, where $\prod_{r|n}r^k$ is the prime decomposition of $n$. Hence, the standard model of any finite field $\F_{p^n}$ is determined by the knowledge of the standard models of the finite fields $\F_{p^{r^k}}$ with $r$ prime.

As a consequence, two different kind of models are considered: the ones with $r=p$, and the ones with a prime number $r\neq p$. In the first case, the Artin-Schreier construction of cyclic extensions of dimension $p$ of a field of characteristic $p$, see \cite{Jacobson_Basic_Algebra_II}, provides a valid model for the description of the tower of degree $p$ field extensions
$$\F_p\subset\F_{p^{p}}\subset\F_{p^{p^2}}\subset\ldots\subset\F_{p^{p^k}}\subset\ldots$$

More specifically, these extensions are generated by the polynomial $f(x,y)=x^p-1-y\sum_{i=1}^{p-1}x^i\in\F_p[x,y]$. Note that as pointed in \cite{Handbook}, the field $\F_{p^{p^k}}$ is obtained as the quotient of the polynomial ring $\F[x_1,x_2,\ldots,x_k]$ modulo the ideal generated by $\{f(x_1,1),f(x_{i+1},x_i)\,|\,1\leq i\leq k\}$, i.e., $\F_{p^{p^{i+1}}}=\F_{p^{p^i}}(\alpha_{i+1})$, where $x^p-1-\alpha_i\left(\sum_{i=1}^{p-1}x^i\right)$ is the minimal polynomial of $\alpha_{i+1}$ over $\F_{p^{p^i}}$.

When $p\neq r$, the standard model is given by the following construction (see \cite{Handbook}). Let us denote $\textbf{r}=r\cdot\mbox{gcd}(r,2)$; recall that $r$ is a prime number and so $\mathbf{r}=r$, when $r>2$, and $\mathbf{r}=4$, when $r=2$. Let $A_{r,i}$ be the polynomial ring $\mathbb{Z}[x_0,x_1,\ldots,x_i]$ modulo the ideal generating by $\{\sum_{j=0}^{r-1}x_0^{j\textbf{r}/r},x_{k+1}^r-x_k\,|\, 0<k\leq i\}$. Notice that the residue class of the element $x_i$ is a primitive $\textbf{r}r^i$-th root of unity, which will be denoted as $\zeta_{\textbf{r}r^i}$. $A_{r,j}$ can be straightforwardly seen as a subring of $A_{r,i}$ for all $j< i$, thus we will define $A_r=\cup_{i=0}^{\infty}A_{r,i}$. The ring $A_{r,i}$ can be identified with the ring of integers of the number field $\mathbb{Q}(\zeta_{\mathbf{r}r^i})$, whereas $A_r$ can be seen as the ring of integers of $\mathbb{Q}(\zeta_{\textbf{r}r^i}\,:\, i\ge 0)$, whose Galois group is $\mathbb{Z}^*_r$, the group of units of the ring of $r$-adic integers. The torsion subgroup of $\mathbb{Z}_r^*$ will be denoted by $\Delta_r$, so that $A_r\cong \Delta_r\times\Gamma$, where $\Gamma\cong\mathbb{Z}_r^+$. The subgroup $\Delta_r$ is isomorphic to $(\mathbb{Z}/\textbf{r}\mathbb{Z})^*$ and so, it is cyclic of order $\varphi(\textbf{r})$, where $\varphi$ denotes the Euler's totient function. 

Let us denote by $B_{r,i}$ the subring of $A_{r,i}$ fixed by the automorphisms of $\Delta_r$, that is, $B_{r,i}=\{a\in A_{r,i}\,|\, \delta(a)=a,\mbox{ for all }\delta\in \Delta_r\}$. From now on, $B_r$ will denote the union $\cup_{i=0}^{\infty}B_{r,i}$.

 For the prime number $p$, the set of prime ideals $\mathfrak{p}$ of $B_r$ containing $p$ will be denoted by $S_{p,r}$. It can be proved that this set has cardinality $r^l$, where $l=\mbox{ord}_r(p^{\varphi(r)}-1)/(\textbf{r}^2/r)$, i.e., $r^l$ is the largest power of $r$ dividing $(p^{\varphi(r)}-1)/(\mathbf{r}^2/r)$. Moreover, for each $\mathfrak{p}$ in $S_{p,r}$ there exists a unique system $(a_{\mathfrak{p},j})_{0\leq j< lr}$ of integers in $\{0,1,\ldots,p-1\}$ such that the ideal $\mathfrak{p}$ is generated as a module over $B_r$ by the prime $p$ together with $\{\eta_{r,k+1,i}-a_{\mathfrak{p},i+kr}\,|\,0\leq k<l,0\leq i<r\}$, where $\eta_{r,k,i}=\sum_{\delta\in\Delta_r}\delta(\zeta_{\mathbf{r}r^k}^{1+i\mathbf{r}r^{k-1}})$ is an element of $B_{r,k}$.

It is possible to define a total ordering on the set $S_{p,r}$ as follows. If $\mathfrak{p}$ and $\mathfrak{q}$ are two different prime ideals containing $p$, we say that $\mathfrak{p}<\mathfrak{q}$ if there exists $h\in\{0,1,\ldots,lr-1\}$ such that $a_{\mathfrak{p},j}=a_{\mathfrak{q},j}$ for all $j< h$, and $a_{\mathfrak{p},h}<a_{\mathfrak{q},h}$, i.e., the ideals in $S_{p,r}$ are ordered according to the lexicographical order on the integers $a_{\mathfrak{p},j}$ which describe them. The smallest element of $S_{p,r}$ in this ordering is denoted by $\mathfrak{p}_{p,r}$. The prime ideal $\mathfrak{p}_{p,r}\cap B_{r,k}$ will be denoted as $\mathfrak{p}_{p,r,k}$. If $\overline{\alpha}_{p,r,k}$ is the residue class of $\eta_{p,r,k}$ modulo $\mathfrak{p}_{p,r,k}$, then it can be proved that, for all $k\geq 0$, the field $\F_p(\overline{\alpha}_{r,l+k,0})$ has cardinality $p^{r^k}$. This field provides the standard model for a field of cardinality $p^{r^k}$ as defined in \cite{Handbook}.

Summarizing, the defining element of the finite field $\F_{p^{r^k}}$ is the residue class (by a certain prefixed prime ideal $\mathfrak{p}_{p,r,k}$) of the Gauss period of the $\mathbf{r}r^{k+l}$-th root of unity with respect to the subgroup $\Gamma$ of $\mathbb{Z}_r^*$. Consequently, the smallest element of $S_{p,r}$ uniquely determines the minimal polynomial of such defining element over $\F_p$.

\section{Standard Model for Galois Rings}

Given a prime number $p$ and two positive integers $n$ and $r$, the Galois Ring $\mbox{GR}(p^n,r)$ is defined as the quotient of the polynomial ring $\mathbb{Z}[x]$ modulo the ideal generated by $p^n$ and a monic polynomial $h(x)\in\mathbb{Z}[x]$ of degree $r$ which is irreducible modulo $(p)=p\mathbb{Z}$, that is, $\mbox{GR}(p^n,r)\cong\mathbb{Z}[x]/(p^n,h(x))$. Galois Rings are local and their ideals form the chain
$$\mbox{GR}(p^n,r)\supseteq p\,\mbox{GR}(p^n,r)\supseteq\ldots\supseteq p^i\,\mbox{GR}(p^n,r)\supseteq\ldots\supseteq p^n\,\mbox{GR}(p^n,r)=\{0\}.$$

\noindent Notice that the ideal $p\,\mbox{GR}(p^n,r)$ is maximal and $\mbox{GR}(p^n,r)$ is an unramified extension of the ring $\mathbb{Z}/p^n\mathbb{Z}$, that is, the maximal ideal of $\mbox{GR}(p^n,r)$ is generated by the maximal ideal of $\mathbb{Z}/p^n\mathbb{Z}$. The quotient $\mbox{GR}(p^n,r)/p\,\mbox{GR}(p^n,r)$ is isomorphic to the finite field $\F_{p^r}$, and it is called the residual field of $\mbox{GR}(p^n,r)$.

%
%

On the other hand, Theorem XVI.8 of \cite{McD} provides a decomposition of the Galois Ring $\mbox{GR}(p^n,r)$ as a tensor product of Galois Rings, according to the prime power decomposition of $r$. So, if $r=\prod_{i=1}^s r_i$, then

\begin{equation}\label{eq:Galois_tensor}
\mbox{GR}(p^n,r)\cong\bigotimes_{i=1}^s\mbox{GR}(p^n,r_i).
\end{equation}

Thus, as for finite fields, we have to deal with two different cases: the case with $r=p^k$ and the case with $(r,p)=1$.

First of all, we will study the case $\mbox{GR}(p^n,r^k)$ with $r$ a prime number different from $p$. Let us consider the ring $B_{r,k}$ described in the previous section. As we have seen, the standard model for finite fields is based on a choice of a prime ideal $\mathfrak{p}_{p,r,k}$ of $B_{r,k}$ such that the quotient $B_{r,k}/\mathfrak{p}_{p,r,k}\cong\F_p(\overline{\alpha}_{r,k+l,0})\cong\F_{p^{r^k}}$, where $\overline{\alpha}_{r,k+l,0}$ is the residue class of the Gauss period $\eta_{r,l+k,0}=\sum_{\delta\in\Delta_r}\delta(\zeta_{\mathbf{r}r^k})$ modulo the ideal $\frak{p}_{p,r,k}$. The powers of such a prime ideal will provide the standard model for Galois Rings. We claim that, for all $n\geq 1$, $B_{r,k}/\mathfrak{p}_{p,r,k}^n\cong\mbox{GR}(p^n,r^k)$.

\begin{proposition}
Let $A_{r,k}$ be the ring of integers of the cyclotomic field $\mathbb{Q}(\zeta_{\mathbf{r}r^k})$, where $\mathbf{r}=(r,2)$. Let $\Delta_r$ be the torsion group of the group of units $\mathbb{Z}_r^*$ of $r$-adic integers, isomorphic to the Galois group of the extension $\mathbb{Q}(\zeta_{\mathbf{r}r^k}\,|\,i\ge 0)\,|\,\mathbb{Q}$. Let $B_{r,k}$ the subring of $A_{r,k}$ fixed by $\Delta_r$ and let $\mathfrak{p}$ be a prime ideal of $B_{r,k}$ containing the prime integer $p$ (in particular, $\mathfrak{p}=\mathfrak{p}_{p,r,k}$). Then $B_{r,k}/\mathfrak{p}^n\cong\mbox{GR}(p^n,r^k)$.
\end{proposition}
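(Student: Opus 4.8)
My plan is to identify $R:=B_{r,k}/\mathfrak{p}^n$ with $\mbox{GR}(p^n,r^k)$ by verifying the three ring-theoretic properties that characterize a Galois ring: that $R$ is a finite local ring of characteristic $p^n$, that its maximal ideal is generated by $p$ (so that $R$ is an \emph{unramified} extension of $\mathbb{Z}/p^n\mathbb{Z}$), and that its residue field is $\F_{p^{r^k}}$. First I would record what comes for free from $B_{r,k}$ being the ring of integers of a number field, hence a Dedekind domain: the quotient of $B_{r,k}$ by the $n$-th power of the nonzero prime $\mathfrak{p}$ is a finite chain ring, so $R$ is local with maximal ideal $\overline{\mathfrak{p}}=\mathfrak{p}/\mathfrak{p}^n$, its ideals are exactly the powers $\overline{\mathfrak{p}}^{\,i}$, and $\overline{\mathfrak{p}}^{\,n}=0\neq\overline{\mathfrak{p}}^{\,n-1}$. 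Its residue field is $R/\overline{\mathfrak{p}}=B_{r,k}/\mathfrak{p}$, which is $\F_{p^{r^k}}$ by the finite-field construction recalled in the previous section; moreover, since each successive quotient $\mathfrak{p}^{\,i}/\mathfrak{p}^{\,i+1}$ is a one-dimensional $B_{r,k}/\mathfrak{p}$-vector space, one gets $|R|=(p^{r^k})^n=p^{n r^k}=|\mbox{GR}(p^n,r^k)|$.

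The crucial point will be that $p$ is unramified in $B_{r,k}$. Indeed $B_{r,k}\subseteq A_{r,k}$, the ring of integers of $\mathbb{Q}(\zeta_{\mathbf{r}r^k})$, and the only rational prime ramifying in a cyclotomic field $\mathbb{Q}(\zeta_m)$ divides $m$; here $m=\mathbf{r}r^k$ is a power of $r$, and $r\neq p$, so $p$ is unramified in $A_{r,k}$ and hence in the subfield fixed by $\Delta_r$. Therefore $e(\mathfrak{p}/p)=1$, i.e. $v_{\mathfrak{p}}(p)=1$, so $p\in\mathfrak{p}\setminus\mathfrak{p}^2$ and its image generates the one-dimensional space $\mathfrak{p}/\mathfrak{p}^2$. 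By Nakayama's lemma in the local ring $R$ the image of $p$ then generates $\overline{\mathfrak{p}}$, that is $\overline{\mathfrak{p}}=pR$. Consequently the ideal chain of $R$ is $R\supset pR\supset\cdots\supset p^nR=0$ with $p^{n-1}R=\overline{\mathfrak{p}}^{\,n-1}\neq 0$, so $R$ has characteristic $p^n$. (In fact the residue degree $r^k$ equals $[B_{r,k}:\mathbb{Z}]=r^k$, so $p$ is inert, $\mathfrak{p}=pB_{r,k}$, and one may write directly $R=B_{r,k}/p^nB_{r,k}$.)

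With these three properties in hand, $R$ is a finite commutative local ring whose maximal ideal $pR$ is principal and nilpotent of index $n$ and whose residue field is $\F_{p^{r^k}}$; by the uniqueness of unramified extensions of $\mathbb{Z}/p^n\mathbb{Z}$ in the structure theory of Galois rings \cite{McD}, such a ring is isomorphic to $\mbox{GR}(p^n,r^k)$, completing the proof. If one prefers an explicit isomorphism, I would instead lift a monic $\overline{g}\in\F_p[x]$ of degree $r^k$ generating $\F_{p^{r^k}}$: since $R$ is finite, hence Henselian, the separable root of $\overline{g}$ in the residue field lifts to a root $\theta\in R$ of a monic lift $g\in(\mathbb{Z}/p^n\mathbb{Z})[x]$, giving a map $\mbox{GR}(p^n,r^k)\cong\mathbb{Z}[x]/(p^n,g)\to R$; a $p$-adic expansion argument using $\overline{\mathfrak{p}}=pR$ shows it is onto, and the cardinality count of the first paragraph forces it to be an isomorphism.

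The step I expect to be the main obstacle is the second paragraph, namely establishing cleanly that the maximal ideal of $R$ is generated by $p$. This is precisely the unramifiedness condition that distinguishes an honest Galois-ring extension of $\mathbb{Z}/p^n\mathbb{Z}$ from a ramified finite chain ring, and it is the only place where the specific arithmetic of $B_{r,k}$ (its sitting inside a cyclotomic field unramified at $p$) is really used. Once unramifiedness and the residue-field identification from the previous section are secured, the remainder is the standard classification of Galois rings together with a cardinality comparison, both of which are routine.
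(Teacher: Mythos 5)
Your argument is essentially the paper's own: both proofs use that $B_{r,k}$ is a Dedekind domain to get that $B_{r,k}/\mathfrak{p}^n$ is local with maximal ideal $\mathfrak{p}/\mathfrak{p}^n$, both derive the key fact that $p$ is unramified in $B_{r,k}$ from the unramifiedness of $p\neq r$ in the cyclotomic field $\mathbb{Q}(\zeta_{\mathbf{r}r^k})$, and both conclude from the characterization of $\mbox{GR}(p^n,r^k)$ as the finite local ring of characteristic $p^n$ with maximal ideal $(p)$ and residue field $\F_{p^{r^k}}$. Your phrasing via $v_{\mathfrak{p}}(p)=1$ and Nakayama replaces the paper's explicit computation $p(B_{r,k}/\mathfrak{p}^n)=(pB_{r,k}+\mathfrak{p}^n)/\mathfrak{p}^n\cong\mathfrak{p}/\mathfrak{p}^n$, but the substance is identical.
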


\begin{proof}
Following \cite{Clark}, it suffices to prove that $\mbox{Rad}(B_{r,k}/\mathfrak{p}^n)\cong p\left(B_{r,k}/\mathfrak{p}^n\right)$. Notice that $B_{r,k}$ is a Dedekind domain and so, $B_{r,k}/\mathfrak{p}^n$ is a local ring whose maximal ideal is $\mathfrak{p}/\mathfrak{p}^n$. It is well known that $p B_{r,k}= \prod_{\mathfrak{q}\in \tilde{S}_{p,r}}\mathfrak{q}^{e_{\mathfrak{q}}}$, where $\tilde{S}_{p,r}$ is the set of prime ideals of $B_{r,k}$ containing $p$ and $e_\mathfrak{q}$ is the ramification index of $\mathfrak{q}$ over $\mathbb{Q}$, which will be equal to 1. Otherwise, $p\in\mathfrak{q}^{e_{\mathfrak{q}}}$ with $e_{\mathfrak{q}}>1$. If $\mathfrak{s}$ is a prime ideal of $A_{r,k}$ containing the prime ideal $\mathfrak{q}$, then $p\in\mathfrak{q}^{e_\mathfrak{q}}\subseteq\mathfrak{s}^{e_{\mathfrak{q}}}$. But this is impossible, since $A_{r,k}$ is unramified over $\mathbb{Q}$, see \cite{Milne}. Thus, we have
$$p\left(B_{r,k}/\mathfrak{p}^n\right)= \left(pB_{r,k}+\mathfrak{p}^n\right)/\mathfrak{p}^n\cong pB_{r,k}/\left(pB_{r,k}\cap\mathfrak{p}^n\right)\cong \prod_{\mathfrak{q}\in S_{p,r}}\mathfrak{q}/\mathfrak{q}^{a_{\mathfrak{q}}}\cong \mathfrak{p}/\mathfrak{p}^n,$$
\noindent where $a_{\mathfrak{p}}=n$ and $a_{\mathfrak{q}}=1$ for $\mathfrak{q}\neq\mathfrak{p}$. Since $B_{r,k}/\mathfrak{p}^n$ is a local ring of characteristic $p^n$ with residual field isomorphic to $\F_{p^{r^k}}$, it follows that $B_{r,k}/\mathfrak{p}^n\cong\mbox{GR}(p^n,r^k)$.
\end{proof}

\begin{remark}
Notice that, since all prime ideals $\mathfrak{p}$ of $B_{r,k}$ containing $p$ are unramified over $\mathbb{Q}$, the quotients $B_{r,k}/\mathfrak{p}^n$ are Galois rings for all $n\ge 1$. In general, for an arbitrary non Dedekind domain $R$,  the quotient $R/\mathfrak{p}^n$ is a finite local chain ring whose structure is given in \cite[Theorem XVII.1]{McD}. 
\end{remark}

So, we have that, for all $n\ge 1$,
 $$\left(B_{r,k}/\mathfrak{p}_{p,r,k}^n\right)/ p\left(B_{r,k}/\mathfrak{p}_{p,r,k}^n\right)\cong B_{r,k}/\mathfrak{p}_{p,r,k}\cong\F_{p^{r^k}}.$$
 
 \noindent Let us denote by $f(x)$ the minimal polynomial of the Gauss period $\eta_{r,l+k,0}$ over $\mathbb{Q}[x]$. Notice that, since $\eta_{r,l+k,0}\in B_{r,k}$, we can ensure that $f(x)\in\mathbb{Z}[x]$. {Let us consider the $p$-adic field $\mathbb{Q}_p$. Since $f(x)\in\mathbb{Q}[x]$ is separable, this polynomial can be factorized in $\mathbb{Q}_p[x]$ into the product of $m$ pairwise different monic irreducible factors. So, we have that
 \begin{equation}\label{eq:Zp_factorization}
  f(x)=\prod_{i=1}^{m}f_i(x)\in{\mathbb{Q}_p[x]}.
\end{equation}}

\noindent{From Hensel's Lemma \cite[Th. 6.1.2]{Gouveia}, it follows that the residue modulo $\mathfrak{p}$ of each factor $f_i(x)$ is a power of some irreducible polynomial in $\F_p[x]$, i.e, $\overline{f_i}(x)=g_i(x)^{e_i}$. Since $A_{r,k}$ is unramified over $\mathbb{Q}$, so are their completions with respect to the $p$-adic absolute value, and so, $e_i=1$ for all $1\leq i\leq m$. Thus, the polynomials $\overline{f}_i(x)\in\F_p[x]$ are irreducible over $\F_p$ for all $1\leq i\leq m$.}
%

 
%

%
{Let $f_1(x)\in\mathbb{Q}_p[x]$ be the factor of $f(x)$ such that $f_1(\eta_{r,l+k,0})=0$. Its reduction modulo $\mathfrak{p}_{p,r,k}$, $\overline{f}_1(x)\in\F_p[x]$, is irreducible, and so, it is the minimal polynomial of the residue of $\eta_{r,l+k,0}$ modulo $\mathfrak{p}_{p,r,k}$, denoted by $\overline{\alpha}_{r,l+k,0}$. Let us denote by $K$ the finite extension $\mathbb{Q}_p(\eta_{r,l+k,0})$, which is clearly unramified. The valuation ring of $K$, denoted by $\mathfrak{O}_K$, is a discrete valuation ring whose elements are integral over $\mathbb{Z}_p$. This ring is local, with maximal ideal $\mathfrak{p}\cong\mathfrak{p}_{p,r,k}$, and $\mathfrak{O}_K/\mathfrak{p}\cong B_{r,k}/\mathfrak{p}_{p,r,k}\cong\F_{p^{r^k}}$. Moreover, $\mathfrak{O}_K$ is a Dedekind domain, and the quotient $\mathfrak{O}_k/\mathfrak{p}^n$ is a strongly separable algebra of rank $r^k$ over the subring generated by the identity and so, isomorphic to $\mbox{GR}(p^n,r^k)$, see \cite{Janusz}.} Thus, 
$$B_{r,k}/\mathfrak{p}_{p,r,k}^n\cong\mbox{GR}(p^n,r^k)\cong\mathfrak{O}_k/\mathfrak{p}^n\cong\mathbb{Z}[x]/(f_1(x),p^n).$$

\noindent So, the quotient $\mathbb{Z}[x]/(f_1(x),p^n)$ gives the standard model for the Galois Ring $\mbox{GR}(p^n,r^k)$.

{Let us suppose that the index of $\mathbb{Z}[\eta_{r,l+k,0}]$ in $B_{r,k}$ is relatively prime with $p$. Thus, starting from the standard model of its residual field, we can give the standard model for the Galois Ring $\mbox{GR}(p^n,r^k)$ by means of the integral version of Hensel's Lemma \cite[Th. 1.4.3]{Bini}.} In such case, we know that the standard model for the residual field $\F_{r^k}$ is given by $\F_p(\overline{\alpha}_{p,r,k})$. Let $f(x)\in\mathbb{Z}[x]$ be the minimal polynomial of $\eta_{r,l+k,0}$ over $\mathbb{Q}[x]$, which can be calculated using theoretical techniques, \cite{Gurak}, or computational ones, \cite{Cohen}. The integral version of Hensel's Lemma allows the lifting of the factorization $\overline{f}=\prod_{i=1}^{r^l}\overline{f}_i(x)\in\mathbb{Z}/p\mathbb{Z}[x]$ to $\mathbb{Z}/p^n\mathbb{Z}[x]$. Thus, we can find basic irreducible polynomials $g_i(x)\in\mathbb{Z}/p^n\mathbb{Z}[x]$ such that 
$$f(x)\equiv\prod_{i=1}^{r^l}g_i(x)\ (\mbox{mod }p^n).$$

%
%

\noindent {So, there will be a unique polynomial, say $g_1(x)$, such that $\overline{g}_1(\overline{\alpha}_{p,r,k})=0$. Since $\mbox{GR}(p^n,r^k)\cong\mathbb{Z}[x]/(p^n,g_1(x))$, this polynomial gives the standard model for a Galois Ring of rank $r^k$ over the subring generated by its identity element. Notice that this construction remains valid as long as the multiplicity of the polynomial $\overline{g}_1(x)$ in the factorization of $\overline{f}(x)\in\mathbb{Z}/p\mathbb{Z}[x]$ is equal to 1. }


It only remains the case $r=p^k$. Notice that the residual field of $\mbox{GR}(p^n,p^k)$ is isomorphic to $\F_{p^{p^k}}$ and so, there exists a standard model for its quotient field given by an irreducible polynomial $f(x)\in\F_p[x]$ of degree $p^k$. Recall that $f(x)$ can be found using the Artin-Schreier theory of cyclic extensions of dimension $p$ over a field of characteristic $p$. This polynomial remains irreducible when it is embedded into $\mathbb{Z}[x]$ and so, it is basic irreducible. Thus, $\mbox{GR}(p^n,p^k)\cong\mathbb{Z}[x]/(p^n,f(x))$, which gives the standard model for this kind of Galois Rings.

{Let us denote by $K_{r^k}$ the (unique) unramified extension of $\mathbb{Q}_p$ of degree $r^k$, which is the splitting field of the polynomial $x^{p^{r^k}}-x\in\mathbb{Q}_p[x]$. Clearly, $K_{r^k}=\mathbb{Q}_p(\eta_{r,k+l,0})$ and $K_{r^k}\subset K_{r^{k+1}}$ for all $k> 0$. For each prime number $r$, let us denote $\mathbb{Q}_{p,r}=\mathbb{Q}_p(\eta_{r,l+1,0},\eta_{r,l+2,0},\ldots)=\bigcup_{k>0}K_{r^k}$.}

{Following \cite{Handbook}, let $\overline{\mathbb{Q}}_p$ be the tensor product, over $\mathbb{Q}_p$, of the rings $\mathbb{Q}_{p,r}$, with $r$ ranging over the set of all prime numbers. It is clear that $\overline{\mathbb{Q}}_p\subset\mathbb{Q}_p^{{un}}$, where $\mathbb{Q}_p^{{un}}$ is the unramified closure of $\mathbb{Q}_p$. Moreover, we have that $\overline{Q}_p=\mathbb{Q}_p(\eta_{r,l+k,0}\,|\, r\mbox{ prime},\, k>0)$, and, for each $k> 0$, the element $\eta_{r,l+k,0}$ is algebraic of degree $r^k$ over $\mathbb{Q}_p$.}

{Given an element $s\in\mathbb{Q}/\mathbb{Z}$, there exists a unique set of integers $(c_{r,k})$, with $r$ ranging over the set of prime numbers and $k$ over $\mathbb{Z}_{>0}$, with $c_{r,k}\in \{0,1,\ldots,r-1\}$, such that $s\equiv\sum_{r,k}c_{r,k}/r^k\,(\mbox{mod }\mathbb{Z})$. Notice that the sum is finite, since the integers $c_{r,k}$ are equal to zero for all but finitely many pairs $(r,k)$. With this notation, the element $\epsilon_s$ is defined as the finite product $\prod_{r,k}\eta_{r,l+k,0}^{c_{r,k}}\in\overline{Q}_p$. It is straightforward to prove that the system $\{\epsilon_s\,|\, s\in\mathbb{Q}/\mathbb{Z}\}$ is a $\mathbb{Q}_p$-basis of $\overline{Q}_p$.}
 
{Given two integers $n\geq 1$ and $m\geq 1$, consider the $\mathbb{Z}_p$-module of $\overline{Q}_p$ generated by the basis $\{ \epsilon_0,\epsilon_{1/m}\ldots\epsilon_{(m-1)/m}\}$, which can be identified with the valuation ring $\mathfrak{O}_{K_m}$ of the unramified extension $K_m$ of $\mathbb{Q}_p$. As we have seen,
$$\mbox{GR}(p^n,m)\cong\mathfrak{O}_{K_m}/\mathfrak{p}^n\cong\langle \epsilon_0,\epsilon_{1/m}\ldots\epsilon_{(m-1)/m}\rangle_{\mathbb{Z}_p}\bigotimes\mathbb{Z}/p^n\mathbb{Z}. $$}

{Let $A$ be a free $\mathbb{Z}/p^n\mathbb{Z}$-module of rank $m$, and let $\psi:A\rightarrow\mathfrak{O}_{K_m}\bigotimes\mathbb{Z}/p^n\mathbb{Z}$ be the unique $\mathbb{Z}/p^n\mathbb{Z}$-module isomorphism which sends, for $0\leq i\leq m-1$, the element $a_i$ of the $\mathbb{Z}/p^n\mathbb{Z}$-basis of $A$ to the element $\epsilon_{i/m}$ of the basis of $\mathfrak{O}_{K_m}\bigotimes\mathbb{Z}/p^n\mathbb{Z}$. The standard model for the Galois Ring $\mbox{GR}(p^n,m)$ is the explicit model is given by the free $\mathbb{Z}/p^n\mathbb{Z}$-module $A$ endowed with the product $u\cdot v=\psi^{-1}(\psi(u)\cdot\psi(v))$, for all $u,v\in A$. }

\section{Some Examples}

In this section, we present some examples of the construction of the standard model for several Galois Rings. We first analyse the case
with $r=3$ and $p\neq 3$ and, later on, the case $r=p=3$. All calculations were made with Magma, \cite{Magma}. 

Let $r=3$, and let $\zeta_{3^{i+1}}$ be a $3^{i+1}$-th primitive root of unity. For all $i\ge 0$, the ring $A_{3,i}$ can be identified with the ring of integers of the cyclotomic field $\mathbb{Q}(\zeta_{3^{i+1}})$. So, $A_3$ can be seen as the ring of integers of the field $\mathbb{Q}(\zeta_{3^i}:i\ge 0)$. The Galois group of the field extension $\mathbb{Q}(\zeta_{3^i}:i\ge 0)\,|\,\mathbb{Q}$ is the group of $3$-adic units $\mathbb{Z}_3^*\cong (\mathbb{Z}/3\mathbb{Z})^*\times\mathbb{Z}_3^+$. Thus, $\Delta_3\cong(\mathbb{Z}/3\mathbb{Z})^*$, which is a cyclic group of order 2. For each $i\ge 0$, recall that $B_{3,i}$ is the subring of $A_{3,i}$ fixed by the automorphisms of $\Delta_3$, and that $B_3=\cup_{i=0}^{\infty}B_{3,i}$.

Let us consider $p=17$. Since $l=\mbox{ord}_3\left((17^2-1)/3\right)=1$, the set $S_{17,3}$ of prime ideals of $B_3$ containing $p=17$ has three elements: 
$$\begin{array}{rcl}
\mathfrak{p}_1&=&\langle 17,\,\eta_{3,1,0}-7,\,\eta_{3,1,1}-14,\,\eta_{3,1,2}-13\rangle_{B_3},\\ 
   \mathfrak{p}_2&=&\langle 17,\,\eta_{3,1,0}-13,\,\eta_{3,1,1}-7,\,\eta_{3,1,2}-14\rangle_{B_3},\\
   \mathfrak{p}_3&=&\langle 17,\,\,\eta_{3,1,0}-14,\eta_{3,1,1}-13,\,\eta_{3,1,2}-7\rangle_{B_3},\\
   \end{array}$$
  
\noindent where $\eta_{3,1,j}=\zeta_{3^2}^{1+3j}+\zeta_{3^2}^{-(1+3j)}$ for $0\leq j\leq 2$. Following \cite{Handbook}, according to the ordering induced in $S_{17,3}$, the smallest element is $\mathfrak{p}_1$. So, $\mathfrak{p}_{17,3}=\langle 17,\,\eta_{3,1,0}-14,\,\eta_{3,1,1}-13,\,\eta_{3,1,2}-7\rangle_{B_3}$.

As in the previous section, let us denote by $\overline{\alpha}_{17,3,k}$ the image of $\eta_{3,k+1,0}=\zeta_{3^{k+2}}+\zeta_{3^{k+2}}^{-1}$ in the field $B_3/(\mathfrak{p}_{17,3}\cap B_{3,k+1})\cong\F_{17^{3^k}}$. The minimal polynomial of $\eta_{3,k+1,0}$ over $\mathbb{Q}$ is known to be, see \cite{Gurak_p3}, $f_k(x)=x^{3^{k+1}}F_k(x^{-1})$, where

$$F_k(x)=x^{3^{k+1}}+\sum_{n=0}^{[3^{k+1}/2]}(-1)^n\frac{3^{k+1}}{3^{k+1}-n} {3^{k+1}-n\choose{n}}x^{2n}.$$

\noindent The polynomial $f_k(x)\in\mathbb{Z}[x]$ will factorize in $\F_{17}[x]$ into 3 factors $g_{k,i}(x)$, $i=1,\cdots,3$, of degree $3^k$ each. For $k=0$, $f_0(x)=x^3-3x+1=(x-14)(x-13)(x-7)$, which splits over $B_3/(\mathfrak{p}_{17,3}\cap B_{3,1})\cong\F_{17}$. For $k\ge 1$, from \cite{Gurak_p3}, we have that $\eta_{3,k+1,0}^3-3\eta_{3,k+1,0}=\eta_{3,k,0}$, so, taking the projection modulo $\mathfrak{p}_{17,3}$, $\overline{\alpha}_{17,3,k+1}$ will be a root of the polynomial $g_{k,1}(x)=x^3-3x-\overline{\alpha}_{17,3,k}\in\F_{17}(\overline{\alpha}_{17,3,k})$, which is irreducible over $\F_{17}(\overline{\alpha}_{17,3,k})[x]$. From Capelli's Lemma, \cite{Ayad}, we obtain the irreducibility of the composition $g_{k,1}(x)=(x^3-3x)^{\circ k}-\overline{\alpha}_{17,3,0}\in\F_{17}[x]$, which gives the minimal polynomial of the element $\overline{\alpha}_{17,3,k}$ defining the standard model for a field of size $17^{3^k}$. A similar argument can be applied to the Gauss periods $\eta_{3,k+1,1}$ and $\eta_{3,k+1,3}$ to obtain the polynomials $g_{k+1,2}(x)$ and $g_{k+1,2}(x)$. Thus, for each $k\geq 0$, we get the following factorization of $f_k(x)$ in $\F_{17}[x]$ into pairwise irreducible factors:
$$f_k(x)\equiv g_{k,1}(x)g_{k,2}(x)g_{k,3}(x)\ (\mbox{mod } 17),$$

%
Once the factorization of $f_k(x)$ in $\F_{17}[x]$ is known, the standard model for $\mbox{GR}(17^n,3^k)$ is given by the integral version of Hensel's Lemma, \cite[Th. 1.4.3]{Bini}. This information is summarized in Table \ref{Table:Ejemplos_r3p17}. Notice that, for each row, the polynomial on the column with $n=1$ is irreducible over $\F_{17}[x]$. Moreover, it is the reduction modulo 17 of the other ones, which are basic irreducible. The last column, polynomials over $\mathbb{Z}_{17}[x]$, is determined by the polynomials of the first column applying the $p$-adic version of Hensel's Lemma, see \cite[Th. 6.1.2]{Gouveia}. All calculations were made with a precision of $17^{10}$.

\begin{table}[h]
\begin{tabular}{c|p{2.5cm}p{2.5cm}p{2.5cm}cp{2.5cm}|}
k&$n=1$&$n=2$&$n=3$&$\cdots$&$\mathbb{Z}_{17}$\\ \hline
 $0$ & $x+10$ &$x+214$&$x+1659$ &$\cdots$&$x+907573721136$ \\ \hline
$1$ & $x^3+14x+10$& $x^3+286x+214$ &$x^3+4910x+1659$ &$\cdots$ &$x^3-3x+907573721136$\\ \hline
$2$ & $x^9+8x^7+10x^5+4x^3+9x+10$& $x^9+280x^7+27x^5+259x^3+9x+214$& $x^9+4904x^7+27x^5+4883x^3+9x+1659$ &$\cdots$&$x^9-9x^7+27x^5-30x^3+9x+907573721136$\\ \hline
$3$ & $x^{27}+7x^{25}+x^{23}+x^{21}+8x^{19}+15x^{17}+16x^7+7x^5+3x^3+7x+10$ & $x^{27}+262x^{25}+35x^{23}+35x^{21}+280x^{19}+49x^{17}+119x^{15}+255x^{13}+187x^{11}+187x^9+254x^7+143x^5+241x^3+262x+214$ & $x^{27}+4886x^{25}+324x^{23}+2636x^{21}+569x^{19}+2072x^{17}+986x^{15}+3434x^{13}+4233x^{11}+765x^9+1410x^7+2455x^5+819x^3+4886x+1659$& $\cdots$ &$x^{27}-27x^{25}+324x^{23}-2277x^{21}+10395x^{19}-32319x^{17}+69768x^{15}-104652x^{13}+107406x^{11}-72930x^9+30888x^7-7371x^5+819x^3        -27x-907573721136$\\ \hline
$\vdots$&$\cdots$&$\cdots$&$\cdots$&$\cdots$&$\cdots$\\ \hline
$k$&$(x^3-3x)^{\circ_{k-1}}+10$&$(x^3-3x)^{\circ_{k-1}}+214$&$(x^3-3x)^{\circ_{k-1}}+1659$&$\cdots$&$(x^3-3x)^{\circ_{k-1}}+907573721136$\\  \hline
\end{tabular}\caption{Standard models for $\mbox{GR}(17^n,3^k)$}\label{Table:Ejemplos_r3p17}
\end{table}

For the next example, let $r=5$ and let $\zeta_{5^{i+1}}$ be a $5^{i+1}$-th primitive root of unity. As for $r=3$, the ring $A_{5,i}$ can be identified with the ring of integers of the cyclotomic field $\mathbb{Q}(\zeta_{5^{i+1}})$ and $A_5$ can be seen as the ring of integers of the field $\mathbb{Q}(\zeta_{5^i}\,:\,i\ge 0)$. The Galois group of the field extension $\mathbb{Q}(\zeta_{5^i}\,:\,i\ge 0)\,|\,\mathbb{Q}$ is the group of $5$-adic units $\mathbb{Z}_5^*\cong (\mathbb{Z}/5\mathbb{Z})^*\times\mathbb{Z}_5^+$. Thus, $\Delta_5\cong(\mathbb{Z}/5\mathbb{Z})^*$, which is a cyclic group of order 4. For each $i\ge 0$, recall that $B_{5,i}$ is the subring of $A_{5,i}$ fixed by the automorphisms of $\Delta_5$, and that $B_5=\cup_{i=0}^{\infty}B_{5,i}$.

Let us consider $p=7$. Since $l=\mbox{ord}_5\left((7^4-1)/5\right)=1$, the set $S_{7,5}$ of prime ideals of $B_5$ containing $p=7$ has five elements: 
$$\begin{array}{rcl}
  \mathfrak{p}_1&=&\langle 7,\,\eta_{5,1,0}-1,\eta_{5,1,1}-3,\,\eta_{5,1,2}-5,\,\eta_{5,1,3}-2,\,\eta_{5,1,4}-3\rangle_{B_5},\\
  \mathfrak{p}_2&=&\langle 7,\,\eta_{5,1,0}-2,\eta_{5,1,1}-3,\,\eta_{5,1,2}-1,\,\eta_{5,1,3}-3,\,\eta_{5,1,4}-5\rangle_{B_5},\\
    \mathfrak{p}_3&=&\langle 7,\,\eta_{5,1,0}-3,\eta_{5,1,1}-1,\,\eta_{5,1,2}-3,\,\eta_{5,1,3}-5,\,\eta_{5,1,4}-2\rangle_{B_5},\\
     \mathfrak{p}_4&=&\langle 7,\,\eta_{5,1,0}-3,\eta_{5,1,1}-5,\,\eta_{5,1,2}-2,\,\eta_{5,1,3}-3,\,\eta_{5,1,4}-1\rangle_{B_5},\\
    \mathfrak{p}_5&=&\langle 7,\,\eta_{5,1,0}-5,\eta_{5,1,1}-2,\,\eta_{5,1,2}-3,\,\eta_{5,1,3}-1,\,\eta_{5,1,4}-3\rangle_{B_5},\\ 
  \end{array}$$

\noindent where $\eta_{5,1,j}=\sum_{\delta\in\Delta_5}\delta(\zeta_{5^2}^{1+5j})=\zeta_{5^2}^{1+5j}+\zeta_{5^2}^{7+10j}+\zeta_{5^2}^{-(1+5j)}+\zeta_{5^2}^{-(7+10j)}$ for $0\leq j\leq 4$. Following \cite{Handbook}, according to the ordering induced in $S_{7,5}$, the smallest element is $\mathfrak{p}_1$. So, $\mathfrak{p}_{7,5}=\langle 7,\,\eta_{5,1,0}-1,\eta_{5,1,1}-3,\,\eta_{5,1,2}-5,\,\eta_{5,1,3}-2,\,\eta_{5,1,4}-3\rangle_{B_5}$.


{As can be seen in \cite{Gurak}, no recurrence formula is known for the minimal polynomial of $\eta_{5,1,0}$ over $\mathbb{Q}$. Using Magma, we found that this minimal polynomial is }
$$f_0(x)=x^5 - 10x^3 + 5x^2 + 10x + 1,$$

\noindent which factorizes in $\F_7[x]$ as
$$f_0(x)\equiv (x-1)(x-2)(x-3)^2(x-5)\ (\mbox{mod }7).$$

\noindent Notice that the factor $(x-3)$ appears with multiplicity 2. However, {the embedding of $f_0(x)$ into the ring $\mathbb{Z}_7[x]$ splits, for a precision of $7^{10}$, into the product of the following five pairwise distinct irreducible factors.}
$$ f_0(x)= (x + 89288611)(x-136787418)(x -55262245)(x - 109033102)(x - 70681095).$$

\noindent Moreover, the projection of $f_0(x)$ over $\mathbb{Z}/7^n\mathbb{Z}[x]$ splits for $n\ge 3$. 

For $k\ge 1$, the polynomial $f_k(x)$ factorizes over $\F_{7}[x]$ into five different irreducible factors of degree $5^k$:

$$f_k(x)\equiv g_{k,1}(x)g_{k,2}(x)g_{k,3}(x)g_{k,4}(x)g_{k,5}(x)\ (\mbox{mod } 7).$$

\noindent The standard model for $\mbox{GR}(7^n,5^k)$ is given by the quotient $(\mathbb{Z}/7^n\mathbb{Z})[x]/(G_{n,k,1}(x))$, where $G_{n,k,1}(x)$ is the lift of $g_{1,k}(x)$ modulo $7^n$ obtained from the factorization of $f_k(x)$ using the integral version of Hensel's Lemma, \cite[Th. 1.4.3]{Bini}. These polynomials, for $k\leq 2$ and $n\leq 3$, can be found in Table \ref{Table:Ejemplos_r5p7}. The last column, polynomials over $\mathbb{Z}_{7}[x]$, is calculated from the first one applying the $p$-adic version of Hensel's Lemma, see \cite[Th. 6.1.2]{Gouveia}. All calculations were made with a precision of $7^{10}$.

\begin{table}[h]
\begin{tabular}{c|p{2.5cm}p{2.5cm}p{2.5cm}cp{2.5cm}|}
k&$n=1$&$n=2$&$n=3$&$\cdots$&$\mathbb{Z}_{7}$\\ \hline
 $0$ & $x+6$ &$x+27$&$x+223$ &$\cdots$&$x + 89288611$ \\ \hline
$1$ & $x^5+4x^3+3x^2+2x+6$& $x^5 +39x^3 +10x^2 + 23x +27 $ &$x^5 + 333x^3 +108x^2 + 121x +223 $ &$\cdots$ &$x^5 - 10x^3 - 118986592x^2 - 70930221x + 89288611$\\ \hline
$2$ & $x^{25} + 6x^{23} + 3x^{21} + 6x^{19} + 3x^{18} + 4x^{17} + 4x^{16} + 5x^{15} + 2x^{14}
        + 3x^{13} + 2x^{12} + 2x^{11} + 6x^9 + 2x^8 + 5x^7 + 4x^6 + 6x^5 + x^4
        + 2x^3 + 5x^2 + 2x + 6$& 
        $ x^{25} + 48x^{23} + 45x^{21} + 20x^{19} + 31x^{18} + 18x^{17} + 25x^{16} + 5x^{15} +
        30x^{14} + 17x^{13} + 2x^{12} + 44x^{11} + 35x^{10} + 13x^9 + 30x^8 +
        26x^7 + 4x^6 + 41x^5 + 36x^4 + 9x^3 + 12x^2 + 44x + 27$ &
        $x^{25} + 293x^{23} + 339x^{21} + 69x^{19} + 178x^{18} + 214x^{17} + 319x^{16} +
        152x^{15} + 275x^{14} + 311x^{13} + 296x^{12} + 142x^{11} + 280x^{10} +
        258x^9 + 324x^8 + 222x^7 + 102x^6 + 237x^5 + 183x^4 + 205x^3 +
        12x^2 + 289x + 223$&
        $\cdots$&$x^{25} - 50x^{23} + 1025x^{21} - 11250x^{19} - 110679405x^{18} - 27514217x^{17} -
        79903246x^{16} + 137649825x^{15} + 16072226x^{14} + 132000431x^{13} -
        25843382x^{12} + 50890709x^{11} - 7926107x^{10} - 125486292x^9 -
        71853031x^8 - 57656706x^7 + 113389042x^6 - 40325244x^5 -
        101821768x^4 - 8793629x^3 + 63002252x^2 + 38678341x + 89288611$\\ \hline
\end{tabular}\caption{Standard models for $\mbox{GR}(7^n,5^k)$}\label{Table:Ejemplos_r5p7}
\end{table}

Finally, let us consider $r=p=3$. So, we will obtain standard models for the Galois Rings $\mbox{GR}(3^n,3^k)$, with $n\ge 1$ and $k\ge 1$. These constructions will be made using, as it was referred in the previous section, the Artin-Schreier theory of cyclic extensions of dimension $p$ over fields of characteristic $p$.

We start by the construction of standard models for this tower of degree 3 field extensions
$$\F_3\subset\F_{3^3}\subset\F_{3^{3^2}}\subset\ldots\subset\F_{3^{3^k}}\subset\ldots$$

\noindent It can be seen that the polynomial $f_k(x)=x^3-x+\alpha_k^{-1}$ is irreducible over $\F_{3}(\alpha_k)[x]$, where $\alpha_{k}=1+\beta_{k}^{-1}$, for $k\ge 1$, $\alpha_0=1$, and $\beta_{k}\in\F_{3^{3^{k}}}$ is a root of $f_{k-1}(x)$. So, we can construct a sequence $(g_k)_{k\ge 0}$ of irreducible polynomials over $\F_3[x]$ such that, for each $k$, $g_k$ gives the standard model of the field $\F_{3^{3^k}}$. The embedding these polynomials into $\mathbb{Z}[x]$ gives a sequence of basic irreducible polynomials. The standard model of $\mbox{GR}(3^n,3^k)$, for $n>1$, is given by the quotient $\mathbb{Z}[x]/(p^n,f(x))$. These models are summarized in Table \ref{Table:Ejemplos_r3p3}.

\begin{table}[h]
\begin{tabular}{|p{2.5cm}|p{2.5cm}|p{2.5cm}|}
$k=1$&$k=2$&$k=3$\\ \hline

$x^3+2x^2+2x+2$& $x^9+2x^8+x^7+x^5+x^3+2x+2$& $x^{27}+2x^{26}+2x^{24}+x^{23}+2x^{22}+2x^{21}+2x^{18}+x^{16}+x^{15}+2x^{14}+2x^{13}+2x^{12}+x^{10}+2x^9+x^8+x^7+x^6+2x^5+x^3+x^2+2x+2$\\ \hline
\end{tabular}\caption{Standard models for $\mbox{GR}(3^n,3^k)$}\label{Table:Ejemplos_r3p3}
  
\end{table}

\bibliography{ring-semisimple}

\def\cprime{$'$}
\begin{thebibliography}{10}

\bibitem{Ayad}
M.~Ayad and D.~L. McQuillan.
\newblock Irreducibility of the iterates of a quadratic polynomial over a
  field.
\newblock {\em Acta Arith.}, 93(1):87--97, 2000.

\bibitem{Bini}
G.~Bini and F.~Flamini.
\newblock {\em {Finite commutative rings and their applications}}.
\newblock Kluwer Academic Publishers, Boston, MA, 2002.

\bibitem{Magma}
W.~Bosma, J.~Cannon, and C.~Playoust.
\newblock The {M}agma algebra system. {I}. {T}he user language.
\newblock {\em J. Symbolic Comput.}, 24(3-4):235--265, 1997.
\newblock Computational algebra and number theory (London, 1993).

\bibitem{Clark}
W.~E. Clark.
\newblock A coefficient ring for finite non-commutative rings.
\newblock {\em Proc. Amer. Math. Soc.}, 33:25--28, 1972.

\bibitem{Cohen}
H.~Cohen.
\newblock {\em A course in computational algebraic number theory}, volume 138
  of {\em Graduate Texts in Mathematics}.
\newblock Springer-Verlag, Berlin, 1993.

\bibitem{HB789}
B.~de~Smit and H.~W. Lenstra.
\newblock Standard models for finite fields.

\bibitem{Gouveia}
F.~Q. Gouv\^{e}a.
\newblock {\em {$p$}-adic numbers}.
\newblock Universitext. Springer-Verlag, Berlin, second edition, 1997.
\newblock An introduction.

\bibitem{Gurak_p3}
S.~Gurak.
\newblock Minimal polynomials for {G}auss periods with {$f=2$}.
\newblock {\em Acta Arith.}, 121(3):233--257, 2006.

\bibitem{Gurak}
S.~Gurak.
\newblock On the minimal polynomial of {G}auss periods for prime powers.
\newblock {\em Math. Comp.}, 75(256):2021--2035, 2006.

\bibitem{Conway}
L.~S. Heath and N.~A. Loehr.
\newblock New algorithms for generating {C}onway polynomials over finite
  fields.
\newblock {\em J. Symbolic Comput.}, 38(2):1003--1024, 2004.

\bibitem{Sole}
W.~C. {Huffman}, J.-L. {Kim}, and P.~{Sol\'e}, editors.
\newblock {\em {Concise encyclopedia of coding theory}}.
\newblock Boca Raton, FL: CRC Press, 2021.

\bibitem{Jacobson_Basic_Algebra_II}
N.~Jacobson.
\newblock {\em Basic algebra. {II}}.
\newblock W. H. Freeman and Company, New York, second edition, 1989.

\bibitem{Janusz}
G.~J. Janusz.
\newblock Separable algebras over commutative rings.
\newblock {\em Trans. Amer. Math. Soc.}, 122:461--479, 1966.

\bibitem{Lenstra}
H.~W. Lenstra, Jr.
\newblock Finding isomorphisms between finite fields.
\newblock {\em Math. Comp.}, 56(193):329--347, 1991.

\bibitem{McD}
B.~R. McDonald.
\newblock {\em {Finite rings with identity}}.
\newblock Marcel Dekker, Inc., New York, 1974.
\newblock Pure and Applied Mathematics, Vol. 28.

\bibitem{Milne}
J.~S. Milne.
\newblock Algebraic number theory (v3.08), 2020.
\newblock Avalaible at www.jmilne.org/math/.

\bibitem{Handbook}
G.~L. Mullen, editor.
\newblock {\em Handbook of finite fields}.
\newblock Discrete Mathematics and its Applications (Boca Raton). CRC Press,
  Boca Raton, FL, 2013.

\end{thebibliography}
\bibliographystyle{abbrv}



\end{document}